\theoremstyle{plain}
 \newtheorem{theorem}{Theorem}
 \newtheorem{corollary}{Corollary}
\newtheorem{lemma}{Lemma}
\DeclareMathOperator{\re}{Re}
\DeclareMathOperator{\im}{Im}
\begin{document}

\title{An extension of the Siegel-Walfisz theorem}

\begin{abstract}
We extend the Siegel-Walfisz theorem to a family of integer sequences that 
are characterized by constraints on the size of the prime factors. 
Besides prime powers, this family includes smooth numbers, almost primes and practical numbers. 
\end{abstract}

\author{Andreas Weingartner}
\address{ 
Department of Mathematics,
351 West University Boulevard,
 Southern Utah University,
Cedar City, Utah 84720, USA}
\email{weingartner@suu.edu}
\date{March 28, 2021}
\subjclass[2010]{11N25, 11N69}
\maketitle

\section{Introduction}

Let $\pi(x,q,a)$ denote the number of primes up to $x$ that are congruent to $a \bmod q$
and let $\pi(x)$ be the number of all primes up to $x$. 
The Siegel-Walfisz theorem shows that, for any fixed $A>0$, we have
\begin{equation}\label{SWO}
\pi(x,q,a)
=\frac{\pi(x)}{\varphi(q)}+ O_A\left(\frac{x}{(\log x)^A}\right) ,
\end{equation}
uniformly for $2\le q\le (\log x)^A$ and $\gcd(a,q)=1$.
Thus, the primes are close to evenly distributed among the $\varphi(q)$ residue classes that are 
coprime to the modulus $q$. 

We will generalize \eqref{SWO} to a 
family of integer sequences that arise as follows. 
Let $\theta$ be an arithmetic function that satisfies
\begin{equation}\label{thetadef}
\theta : \mathbb{N} \to \mathbb{R}\cup \{\infty\}, \quad \theta(1)\ge 2, \quad \theta(n)\ge P^+(n) \quad (n\ge 2),
\end{equation}
where $P^+(n)$ denotes the largest prime factor of $n$. 
Let $\mathcal{B}=\mathcal{B}_\theta$ be the set of positive integers containing $n=1$ and all those $n \ge 2$ with prime factorization $n=p_1^{\alpha_1} \cdots p_k^{\alpha_k}$, $p_1<p_2<\ldots < p_k$, which satisfy 
\begin{equation*}\label{Bdef}
p_{i} \le \theta\big( p_1^{\alpha_1} \cdots p_{i-1}^{\alpha_{i-1}} \big) \qquad (1\le i \le k).
\end{equation*}

\renewcommand{\arraystretch}{1.2}
\begin{table}[h]
     \begin{tabular}{ | c |  c |  }
    \hline
    $ \theta(n) $ & $\mathcal{B} $\\ \hline   
    $
     \begin{cases}
   \infty & n=1 \\
  P^+(n)  & n\ge 2
   
  \end{cases}
  \quad \ 
     $ 
     & prime powers \\ \hline   
    $
     \begin{cases}
   \infty & \omega(n)<k \\
  P^+(n)  & \omega(n)\ge k
   
  \end{cases}
     $ 
     & almost primes ($\omega(n)\le k$) \\ \hline   
     $\max\left(y,P^+(n)\right)$ & $y$-smooth numbers  \\ \hline
     $nt$ & $t$-dense numbers \\ \hline
     $\sigma(n)+1$ & practical numbers \\ \hline
\end{tabular}        
   \medskip
     \caption{Examples of $\theta$ and the corresponding set $\mathcal{B}$.}\label{table0}
\end{table}

\renewcommand{\arraystretch}{1}

Let $\mathcal{B}(x) = \mathcal{B}\cap [1,x]$ and $B(x) =|\mathcal{B}(x)|$. 
We will only use the subscript $\theta$ to avoid confusion when more than one $\theta$ is involved in the discussion.
Note that the two conditions on $\theta$ in \eqref{thetadef} are for convenience only and do not represent a restriction, because $\theta(1)<2$
implies $\mathcal{B}=\{1\}$, and replacing $\theta(n)$ by $\max(\theta(n),P^+(n))$ does not change $\mathcal{B}$. 

Several examples of $\theta$ and the corresponding set $\mathcal{B}$ are listed in Table \ref{table0}, where $\omega(n)$ is the number of distinct prime factors of $n$ and $\sigma(n)$ is the sum of the positive divisors of $n$. 
The $t$-dense numbers \cite{Saias1,Ten86,PDD} are integers whose ratios of consecutive divisors do not exceed $t$. 
The practical numbers \cite{Mar, Saias1, Sri, Ten86,PDD} are integers $n$ such that every natural number $m$ up to $n$ can be expressed as a sum of distinct positive divisors of $n$.

Theorem \ref{thm1} shows that the integers in $\mathcal{B}$
are close to evenly distributed among all the residue classes that are coprime to the modulus,
provided the main term is not absorbed by the error term.
If $\theta(1)=\infty$ and $\theta(n)=P^+(n)$ for $n\ge 2$, then $\mathcal{B}$ is the set of prime powers and
Theorem \ref{thm1} reduces to \eqref{SWO}, since higher powers of primes, as well as primes dividing $q$, are negligible. 
The Siegel-Walfisz theorem, in the form of Lemma \ref{SW}, is a key ingredient in the proof of Theorem \ref{thm1}.
\begin{theorem}\label{thm1}
Assume $\theta$ satisfies \eqref{thetadef}. Let $A>0$ be fixed. Uniformly for $2\le q\le (\log x)^A$, $a\in \mathbb{Z}$ with
$\gcd(a,q)=1$, we have
$$
B(x,q,a):=\sum_{n \in \mathcal{B}(x) \atop n\equiv a \bmod q} 1
=  O_A\left(\frac{x}{(\log x)^A}\right) + \frac{1}{\varphi(q)} \sum_{n \in \mathcal{B}(x) \atop \gcd(n,q)=1} 1 .
$$
The implied constant only depends on $A$ and not on the choice of $\theta$. 
\end{theorem}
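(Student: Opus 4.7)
The plan is to decompose each $n\in\mathcal{B}(x)$ with $n\ge 2$ as $n=mp^\alpha$, where $p=P^+(n)$, $p^\alpha\Vert n$, and $\gcd(m,p)=1$. By the definition of $\mathcal{B}$, this decomposition is a bijection between the $n\in\mathcal{B}(x)$ with $n\ge 2$ and triples $(m,\alpha,p)$ with $m\in\mathcal{B}$, $\alpha\ge 1$, $p$ prime, $P^+(m)<p\le\theta(m)$, and $mp^\alpha\le x$. Setting $y=\exp\bigl((\log x)^{1/2}\bigr)$, I would first discard two families whose total cardinality is $O_A(x/(\log x)^A)$: those $n$ with $P^+(n)\le y$, numbering at most $\Psi(x,y)\ll x\rho\bigl((\log x)^{1/2}\bigr)\ll_A x/(\log x)^A$ by standard smooth-number bounds; and those with $\alpha\ge 2$ and $p>y$, whose count is $\ll\sum_{p>y}\sum_{\alpha\ge 2}x/p^\alpha\ll x/y\ll_A x/(\log x)^A$. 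Each family contributes $O_A(x/(\log x)^A)$ both to $B(x,q,a)$ and to the claimed main term, so it can be absorbed into the error.

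For the remaining generic $n=mp$ with $p>y$ and $p\nmid m$, let $U_m=\max(y,P^+(m))$ and $V_m=\min(\theta(m),x/m)$. Grouping by $m$, and noting that $\gcd(a,q)=1$ forces $\gcd(m,q)=1$, one has
\[
B(x,q,a) = \sum_{\substack{m\in\mathcal{B}\\ \gcd(m,q)=1}}\#\{p: U_m<p\le V_m,\ p\equiv a\bar m\bmod q\} + O_A\!\left(\frac{x}{(\log x)^A}\right),
\]
where $\bar m$ is the inverse of $m$ modulo $q$. The inner set is empty unless $V_m>y$, and in that case $\log V_m>(\log x)^{1/2}$, so $q\le(\log x)^A\le(\log V_m)^{2A}$ for $x$ large. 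The Siegel--Walfisz theorem (Lemma~\ref{SW}, with its exponential error) then applies at the endpoint $V_m$ and yields
\[
\#\{p: U_m<p\le V_m,\ p\equiv a\bar m\bmod q\} = \frac{\#\{p: U_m<p\le V_m,\ \gcd(p,q)=1\}}{\varphi(q)} + E(m),
\]
with $E(m)\ll V_m\exp(-c\sqrt{\log V_m})\ll(x/m)\exp\bigl(-c(\log x)^{1/4}\bigr)$. Summing the error over $m$ gives $\sum_m E(m)\ll x(\log x)\exp(-c(\log x)^{1/4})\ll_A x/(\log x)^A$. Running the decomposition $n=mp$ in reverse turns the sum of main terms into $\varphi(q)^{-1}$ times the count of generic $n\in\mathcal{B}(x)$ coprime to $q$, and reinstating the two discarded families yields $\varphi(q)^{-1}\#\{n\in\mathcal{B}(x):\gcd(n,q)=1\}$ up to $O_A(x/(\log x)^A)$.

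The principal obstacle is the calibration of $y$. It must be large enough that imposing $p>y$ forces $\log V_m\gg(\log x)^{1/2}$, keeping Siegel--Walfisz effective for moduli up to $(\log x)^A$ across every value of $V_m$ that arises (note that $V_m$ ranges over all of $(y,x]$ because of the arbitrary factor $\theta(m)$); yet it must be small enough that the smooth-number remainder $\Psi(x,y)$ still beats $x/(\log x)^A$. The choice $y=\exp((\log x)^{1/2})$ is the natural balancing point. Uniformity in $\theta$ is automatic: the only use of $\theta$ in the argument is through the upper limit $V_m\le x/m$, and the error bounds above depend on $V_m$ only through this inequality.
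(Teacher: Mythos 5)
Your proof is correct and takes a genuinely different route from the paper. The paper treats the character sum $\sum_{n\in\mathcal{B}(x)}\chi(n)$ analytically: it passes to the complementary sum $S(x)=\sum_{n\le x,\,n\notin\mathcal{B}}\chi(n)$, applies Perron's formula, invokes the product identity $L_\chi(s)=\sum_{n\in\mathcal{B}}\chi(n)n^{-s}\prod_{p>\theta(n)}\bigl(1-\chi(p)p^{-s}\bigr)^{-1}$ (Lemma~\ref{lemfunceq}), splits into $\theta(n)\le N$ and $\theta(n)>N$, controls the tail with a Siegel--Walfisz character-sum estimate over primes (Lemma~\ref{lempsum}), and shifts the contour for the finite piece using the Kolesnik convexity bound on $L_\chi$ (Lemma~\ref{lemL}). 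Your argument instead peels off the top prime power of each $n$ to produce the bijection $\mathcal{B}\setminus\{1\}\leftrightarrow\{(m,p,\alpha): m\in\mathcal{B},\ P^+(m)<p\le\theta(m),\ \alpha\ge1\}$ --- the combinatorial avatar of the same Euler-type identity --- and then applies Siegel--Walfisz pointwise to the prime ranges $(U_m,V_m]$. This bypasses Perron, contour shifting and $L$-function bounds entirely; the only nonelementary input is Siegel--Walfisz for $\pi(x;q,a)$ and a crude smooth-number estimate, and uniformity in $\theta$ falls out because the error per $m$ is bounded by $(x/m)\exp(-c(\log x)^{1/4})+O(1)$, independent of $\theta$. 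Two small points to tighten: Lemma~\ref{SW} as stated only gives the power-of-$\log$ error, whereas you invoke the exponential form; the power-of-$\log$ form also works (since $\log V_m>(\log x)^{1/2}$ you may take the error exponent $B=2(A+1)$ and sum $\sum_{m<x/y}(x/m)(\log V_m)^{-B}\ll x(\log x)^{1-B/2}\ll_A x/(\log x)^A$). And strictly one should also account for the Siegel--Walfisz error at the endpoint $U_m$ and for the $O(\omega(q))$ primes dividing $q$, both of which are harmless since $U_m\ge y$ and $\omega(q)\ll\log q$.
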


For the problem of smooth numbers in arithmetic progressions,  
Harper \cite{Harper} establishes asymptotic equidistribution among the residue classes
in a larger domain than is implied by Theorem \ref{thm1}. 
For almost primes ($\omega(n)\le k$) in arithmetic progressions, Friedlander \cite{Fried} ($k=2$) and 
Knafo \cite[Thm. 6]{Knafo} ($k \ge 2$) give estimates for the corresponding sums with weights $\Lambda_k(n)$, 
the generalized von Mangoldt function. 

Unlike with primes, the distribution of integers in $\mathcal{B}$ among residue classes $a\bmod q$ with $\gcd(a,q)>1$
is not immediately obvious in general. If $\theta $ satisfies $\theta(n)\le \theta(mn)$ for all $m,n \in \mathbb{N}$, we 
will show that Theorem \ref{thm1} implies uniform distribution among all the residue classes that share the same greatest common divisor with the modulus.
\begin{corollary}\label{cor0}
Assume $\theta$ satisfies \eqref{thetadef} and $\theta(n)\le \theta(mn)$ for all $m,n \in \mathbb{N}$. Let $A>0$ be fixed. Uniformly for $2\le q\le (\log x)^A$, $a\in \mathbb{Z}$, $d:=\gcd(a,q)$, we have
$$
B(x,q,a)
= O_A\left(\frac{x}{(\log x)^A}\right)+\frac{1}{\varphi(q/d)} \sum_{n \in \mathcal{B}(x) \atop \gcd(n,q)=d} 1 .
$$
The implied constant only depends on $A$ and not on the choice of $\theta$. 
\end{corollary}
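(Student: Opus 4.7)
The plan is to reduce the corollary to Theorem~\ref{thm1} via the substitution $n = dm$ together with a smooth/rough decomposition of $m$ that exploits the monotonicity hypothesis $\theta(n) \le \theta(mn)$.

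Write $a = da'$, $q = dq'$, so that $\gcd(a', q') = 1$. Since $n \equiv a \pmod q$ with $n \in \mathcal{B}(x)$ forces $d \mid n$, the substitution $n = dm$ gives
\[
B(x, q, a) = \bigl|\{m \le x/d : dm \in \mathcal{B},\ m \equiv a' \pmod{q'}\}\bigr|,
\]
with $\gcd(m, q') = 1$ automatic, and analogously $\sum_{n \in \mathcal{B}(x),\,\gcd(n, q) = d} 1 = |\{m \le x/d : dm \in \mathcal{B},\ \gcd(m, q') = 1\}|$. The corollary thus amounts to a Siegel--Walfisz estimate for the set $\mathcal{B}^{(d)} := \{m : dm \in \mathcal{B}\}$ in coprime residues modulo $q'$.

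Fix a threshold $y \ge \max(P^+(d), q)$ (say $y = q$) and decompose each $m$ uniquely as $m = uv$, where $u$ is the $y$-smooth part of $m$ and $P^-(v) > y$. Since $\gcd(du, v) = 1$ and all primes of $du$ are $\le y < P^-(v)$, the monotonicity yields the key decoupling
\[
dm \in \mathcal{B} \iff du \in \mathcal{B} \ \text{and}\ v \in \mathcal{B}_{\theta_u^*}, \qquad \theta_u^*(\ell) := \theta(du \cdot \ell),
\]
with $\theta_u^*$ inheriting \eqref{thetadef}. For each admissible $u$ (satisfying $P^+(u) \le y$, $\gcd(u, q') = 1$, $du \in \mathcal{B}$), I apply Theorem~\ref{thm1} to $\mathcal{B}_{\theta_u^*}$ with modulus $q'$ and residue $a' u^{-1} \pmod{q'}$. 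Summed over $u$, the main terms reassemble into $\varphi(q')^{-1}\sum_{n \in \mathcal{B}(x),\,\gcd(n, q) = d} 1$, and the errors $\sum_u O_A(x/(du (\log x)^A))$ aggregate to $O_A(x/(\log x)^A)$, since $u$ ranges over $y$-smooth integers with $y \le (\log x)^A$, so $\sum_u 1/u$ grows only as a small power of $\log \log x$.

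The principal obstacle is that the restriction $P^-(v) > y$ on the inner sum is not directly captured by $\mathcal{B}_{\theta_u^*}$, as \eqref{thetadef} encodes only upper bounds on prime factors. I would resolve this by M\"obius inversion $\mathbf{1}[P^-(v) > y] = \sum_{e \mid v,\ e \mid P_y} \mu(e)$, with $P_y = \prod_{p \le y} p$, rewriting the constrained inner sum as a signed combination indexed by squarefree $y$-smooth $e$; each term has the same form with $du$ replaced by $due$, so another application of Theorem~\ref{thm1} suffices. Verifying that the accumulated M\"obius errors still fit within the uniform $O_A(x/(\log x)^A)$ bound, with implied constant depending only on $A$ and not on $\theta$ as the corollary stipulates, is the main technical step.
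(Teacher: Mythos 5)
Your reduction to the set $\mathcal{B}^{(d)}=\{m:dm\in\mathcal B\}$ via the substitution $n=dm$ is the right starting point and matches the paper's first step. The decoupling claim in the middle is also correct: if $m=uv$ with $u$ the $y$-smooth part, $P^-(v)>y\ge \max(P^+(d),q)$, then indeed $dm\in\mathcal{B}$ if and only if $du\in\mathcal{B}$ and $v\in\mathcal{B}_{\theta_u^*}$, since the primes of $du$ precede those of $v$ in the sorted factorization.

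The gap is in the M\"obius inversion step, specifically the assertion that ``each term has the same form with $du$ replaced by $due$.'' After writing $\mathbf{1}[P^-(v)>y]=\sum_{e\mid\gcd(v,P_y)}\mu(e)$ and reindexing $v=ew$, the inner condition becomes $ew\in\mathcal{B}_{\theta_u^*}$. This is \emph{not} equivalent to $w\in\mathcal{B}_{\theta_{ue}^*}$: the primes of $e$ need not precede those of $w$ in the sorted factorization of $ew$ (indeed $w$ may contain primes $\le y$, and even primes of $e$ itself), so the decoupling argument you used for the smooth/rough split does not carry over. In other words, you are left needing to count $\{w:ew\in\mathcal{B}_{\theta_u^*}\}$ in a coprime residue class, which is not a $\mathcal{B}_\eta$-set for any $\eta$ satisfying \eqref{thetadef}, so Theorem~\ref{thm1} cannot be invoked directly. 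Closing this gap is precisely the content of the paper's Lemma~\ref{prop1}/\ref{lemprop1}, which shows $\{m:mq\in\mathcal{B}_\theta\}$ agrees with $\mathcal{B}_{\theta_q}$ (for an explicitly constructed $\theta_q$) up to an exceptional set bounded by $\Psi(x,q)$.

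Once you have that lemma, the smooth/rough decomposition and M\"obius inversion become unnecessary, and the paper's proof is much shorter: set $n=d\tilde n$, use Lemma~\ref{lemprop1} to replace $\{\tilde n:d\tilde n\in\mathcal{B}_\theta\}$ by $\mathcal{B}_{\theta_d}$ up to an $O(\Psi(x/d,d))$ error, apply Theorem~\ref{thm1} with modulus $\tilde q=q/d$ and residue $\tilde a=a/d$ (which are coprime), and use Lemma~\ref{lemprop1} again to convert the main term back to $\sum_{n\in\mathcal{B}(x),\ \gcd(n,q)=d}1$. The $\Psi$-errors are absorbed by $\eqref{Req}$. Your approach, even if repaired, would also need to address the error aggregation over $u$ and $e$ with uniform constants and the range restriction $q'\le(\log(x/(due)))^A$ for Theorem~\ref{thm1} to apply, whereas the paper's route requires a single application of the theorem.
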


Define
$$
\mathcal{B}_q(x) =\mathcal{B}_{\theta,q} (x) = \{n\in \mathcal{B}_\theta(x), q|n\}, \quad B_q(x) =B_{\theta,q}(x) = |\mathcal{B}_q(x) |.
$$
The inclusion-exclusion principle allows us to rewrite the right-hand side of Corollary \ref{cor0} as follows.
\begin{corollary}\label{cor1}
With the assumptions of Corollary \ref{cor0} we have 
$$
B(x,q,a)
= O_A\left(\frac{x}{(\log x)^A}\right)+
\frac{1}{\varphi(q/d)} \sum_{m|q/d} \mu(m) B_{dm}(x) ,
$$
where $d=\gcd(a,q)$. 
\end{corollary}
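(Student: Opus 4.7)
The plan is to start from the conclusion of Corollary \ref{cor0} and rewrite the inner sum $\sum_{n\in \mathcal{B}(x),\ \gcd(n,q)=d} 1$ using Möbius inversion; no further analytic input beyond Corollary \ref{cor0} is needed.

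First I would observe that the condition $\gcd(n,q)=d$ is equivalent to the two conditions $d\mid n$ and $\gcd(n/d,\,q/d)=1$. Using the identity
$$
[\gcd(n/d,\,q/d)=1] = \sum_{m\mid \gcd(n/d,\,q/d)} \mu(m),
$$
I can write
$$
\sum_{n \in \mathcal{B}(x) \atop \gcd(n,q)=d} 1
= \sum_{n \in \mathcal{B}(x) \atop d\mid n} \, \sum_{m \mid \gcd(n/d,\,q/d)} \mu(m).
$$

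Next I would swap the order of summation. The condition $m\mid n/d$ together with $d\mid n$ is exactly $dm\mid n$, so interchanging the sums yields
$$
\sum_{m\mid q/d} \mu(m) \sum_{n \in \mathcal{B}(x) \atop dm\mid n} 1
= \sum_{m\mid q/d} \mu(m)\, B_{dm}(x),
$$
by the definition of $B_{dm}(x) = |\mathcal{B}_{\theta,dm}(x)|$. Substituting this expression for the inner sum in Corollary \ref{cor0} gives the claimed formula.

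There is no substantive obstacle here: once Corollary \ref{cor0} is granted, the statement is a routine inclusion–exclusion identity, and the error term $O_A(x/(\log x)^A)$ is carried over unchanged from Corollary \ref{cor0}. The only minor point to verify is that the rearrangement of the double sum is legitimate, which is immediate as all sums are finite.
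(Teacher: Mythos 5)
Your argument is correct and is precisely the inclusion--exclusion computation the paper has in mind: the paper simply states that ``the inclusion-exclusion principle allows us to rewrite the right-hand side of Corollary \ref{cor0},'' and you have filled in the standard M\"obius-sum manipulation ($\gcd(n,q)=d$ iff $d\mid n$ and $\gcd(n/d,q/d)=1$, then insert $\sum_{m\mid\gcd(n/d,q/d)}\mu(m)$ and swap). No discrepancy.
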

One way to understand $B_q(x)$, and thus the right-hand side of Corollary \ref{cor1}, 
is to consider a modified version of $\theta$, denoted by $\theta_q$, 
which we define as follows. 
Let $q=q_1 \cdots q_k$, where $q_1\le \ldots \le q_k$ are the prime factors of $q$.
If $\theta(n)<q_1$ then $\theta_q(n)=\theta(n)$. Otherwise,
\begin{equation*}\label{tildetheta}
\theta_q(n) = \theta(n q_1\cdots q_j)
\end{equation*}
where $j$ is as large as possible, subject to  
$$
q_i \le \theta(nq_1 \cdots q_{i-1}) \quad (1\le i \le j).
$$
\begin{lemma}\label{prop1}
Assume \eqref{thetadef} and $\theta(n)\le \theta(mn)$ for all $n,m\in \mathbb{N}$. We have  
$$ B_{\theta_q}(x/q) - R(x,q) \le  B_{\theta,q}(x) \le  B_{\theta_q}(x/q) ,$$
where
$$
R(x,q) = |\{n\le x/q: \theta(n)<P^+(q)\}|.
$$
\end{lemma}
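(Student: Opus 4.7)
The plan is to use the bijection $m \mapsto qm$ to translate between $\mathcal{B}_{\theta,q}(x)$ and a subset of $\mathcal{B}_{\theta_q}(x/q)$. Write $n = qm = p_1^{\alpha_1}\cdots p_l^{\alpha_l}$ with $p_1<\cdots<p_l$ and $m = r_1^{\beta_1}\cdots r_s^{\beta_s}$. Call $p_j$ an \emph{$m$-prime} if $p_j\mid m$ and a \emph{$q$-only-prime} otherwise. The key observation is that the prefix $L_{j-1} = p_1^{\alpha_1}\cdots p_{j-1}^{\alpha_{j-1}}$ in the $\mathcal{B}_\theta$-condition factors as $L_{j-1} = N \cdot q_1\cdots q_a$, where $N = \prod_{r_t<p_j} r_t^{\beta_t}$ and $a$ is the number of prime factors of $q$ (counted with $q$-multiplicity) strictly below $p_j$.

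For the upper bound $B_{\theta,q}(x)\le B_{\theta_q}(x/q)$, assume $qm\in\mathcal{B}_\theta$; I will verify $r_i\le\theta_q(N_i)$ at each $m$-prime $r_i$, where $N_i = r_1^{\beta_1}\cdots r_{i-1}^{\beta_{i-1}}$. Let $J_i$ be the number of prime factors of $q$ strictly below $r_i$, so that the prefix before $r_i$ in $n$ is $N_i\,q_1\cdots q_{J_i}$. For each $l\le J_i$, applying the $\mathcal{B}_\theta$-condition of $n$ at the prime $q_l$ gives $q_l\le\theta(M_l\,q_1\cdots q_{l-1})$ with $M_l=\prod_{r_t<q_l}r_t^{\beta_t}$; since $M_l\mid N_i$, monotonicity yields $q_l\le\theta(N_i\,q_1\cdots q_{l-1})$, which is exactly the absorption condition in the definition of $\theta_q$, so $j_i\ge J_i$. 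A further application of monotonicity gives $\theta_q(N_i)=\theta(N_i\,q_1\cdots q_{j_i})\ge\theta(N_i\,q_1\cdots q_{J_i})\ge r_i$. The boundary case $\theta(N_i)<q_1$ is compatible with $qm\in\mathcal{B}_\theta$ only when no $q$-prime lies below $r_i$ (otherwise $q_1\le\theta(M_1)\le\theta(N_i)<q_1$), so $J_i=0$ and $r_i\le\theta(N_i)=\theta_q(N_i)$ follows.

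For the lower bound, I will show that $m\in\mathcal{B}_{\theta_q}(x/q)$ together with $\theta(m)\ge P^+(q)$ implies $qm\in\mathcal{B}_\theta$; the exceptional $m$ satisfy $\theta(m)<P^+(q)$ and are bounded by $R(x,q)$. At each $m$-prime $r_i$, write $\theta_q(N_i)=\theta(N_i\,q_1\cdots q_{j^*})$ for its maximal $j^*$: if $j^*\le J_i$ then monotonicity gives $\theta(L_{j-1})\ge\theta_q(N_i)\ge r_i$; if $j^*>J_i$ the maximality produces $r_i\le q_{J_i+1}\le\theta(N_i\,q_1\cdots q_{J_i})=\theta(L_{j-1})$. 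At each $q$-only-prime $p_j=q_{a+1}$, let $N=\prod_{r_t<p_j}r_t^{\beta_t}$: if $p_j$ exceeds every $m$-prime, then $N=m$ and $\theta(L_{j-1})\ge\theta(m)\ge P^+(q)\ge p_j$ by monotonicity; otherwise the next $m$-prime $r_{c+1}$ satisfies $r_{c+1}>p_j$ and $r_{c+1}\le\theta_q(N)$, so supposing $j^*\le a$ yields the contradiction $r_{c+1}\le\theta(N\,q_1\cdots q_{j^*})<q_{j^*+1}\le q_{a+1}<r_{c+1}$, forcing $j^*\ge a+1$, after which the defining condition at $l=a+1$ gives $p_j=q_{a+1}\le\theta(N\,q_1\cdots q_a)=\theta(L_{j-1})$.

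The main obstacle is the bookkeeping of how $m$-primes and $q$-primes interleave in the factorization of $n$, especially when $q$ has repeated prime factors; once the prefix identity $L_{j-1}=N\cdot q_1\cdots q_a$ is established, each direction reduces to comparing the absorption index $j^*$ in $\theta_q$'s definition with the actual count $J_i$ (or $a$) of $q$-primes below a given threshold, a short case analysis driven by the monotonicity of $\theta$ and the maximality in the definition of $\theta_q$.
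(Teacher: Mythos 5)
Your proposal is correct and follows essentially the same route as the paper: establish the two set inclusions $\{m : mq \in \mathcal{B}_\theta\} \subset \mathcal{B}_{\theta_q} \subset \{m : mq \in \mathcal{B}_\theta\} \cup \{m : \theta(m) < P^+(q)\}$ and count both sides up to $x/q$. The only cosmetic difference is that you prove the second inclusion directly (assuming $m \in \mathcal{B}_{\theta_q}$ and $\theta(m)\ge P^+(q)$, you verify the $\mathcal{B}_\theta$-condition for $mq$ at each prime), whereas the paper argues contrapositively from a violated condition in $mq$; the case analysis over $m$-primes versus $q$-primes, the comparison of the absorption index $j^*$ in $\theta_q$ against the count of $q$-primes below a threshold, and the use of monotonicity of $\theta$ are all the same.
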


Since $\theta(n)\ge P^+(n)$, we always have 
\begin{equation}\label{Req}
R(x,q) \le \Psi(x,q):=|\{n\le x: P^+(n)\le q\}| \ll_A \frac{x}{(\log x)^A}, 
\end{equation}
provided $q\le (\log x)^A$, by \cite[Thm. III.5.1]{Ten}. If $\theta(n)\ge n$, then $R(x,q) < P^+(q)$. 

With Lemma \ref{prop1}, the known asymptotic results for $B_\theta(x)$ under various conditions on $\theta(n)$  (see \cite{PDD,SPA}),
applied with $\theta_q$ in place of $\theta$, translate to asymptotics for $B_{\theta,q}(x)$,
which in turn lead to  estimates for $B(x,q,a)$, by Corollary \ref{cor1}.  
Since the implied constants in the error terms of these estimates depend on $\theta$,
any estimates for $B(x,q,a)$ derived in this manner will have implied constants that depend on $q$. 
We only take a closer look at the case when $n\le \theta(n) \ll \sigma(n)$,
which includes the practical numbers as well as the $t$-dense integers when $t\ll 1$.

If $n \le \theta(n) \ll \sigma(n)$ and $\theta(mn)\ll m^{O(1)}\theta(n)$, 
\cite[Thm. 1.2]{PDD} and \cite[Remark 1]{PW} show that
\begin{equation}\label{Basymp}
B(x) = \frac{c_\theta x}{\log x}\left(1+O_\theta \left(\frac{1}{\log x}\right)\right),
\end{equation}
for some positive constant $c_\theta$.
Lemma \ref{prop1} implies
$$
B_{\theta,q}(x) = \frac{c_{\theta_q} x/q}{\log x/q}\left(1+O_{\theta, q}\left(\frac{1}{\log x/q}\right)\right)
=  \frac{c_q x}{\log x}\left(1+O_{\theta,q}\left(\frac{1}{\log x}\right)\right),
$$
where $c_q:=c_{\theta_q} /q$. Thus, Corollary \ref{cor1}, Lemma \ref{prop1} and \eqref{Basymp}
yield the following estimate for $B(x,q,a)$. 
 
\begin{corollary}\label{cor3}
Assume \eqref{thetadef}, $n\le \theta(n) \ll \sigma(n)$ and $\theta(n)\le \theta(nm)\ll m^{O(1)}\theta(n)$ for all $n,m \in \mathbb{N}$. 
For $q\in \mathbb{N}$, $a \in \mathbb{Z}$ and $d=\gcd(q,a)$, we have
\begin{equation}\label{BxqaAsymp}
B(x,q,a) = \frac{c_{q,a} x}{\log x}  +O_{\theta,q}\left(\frac{x}{(\log x)^2}\right),
\end{equation}
where
\begin{equation}\label{cqa}
c_{q,a}=\frac{1}{\varphi(q/d)} \sum_{m|q/d} \mu(m) c_{dm},\quad  d=\gcd(a,q),
\end{equation}
$$
c_{dm}:=c_{dm,0}=\frac{c_{\theta_{dm}}}{dm}, \quad 
c_{\theta_{dm}} = \lim_{x\to \infty} \frac{B_{\theta_{dm}}(x)}{x/\log x}.
$$
Moreover, $c_{q,a}=0$ if and only if $B(x,q,a)\le 1$ for all $x$. 
\end{corollary}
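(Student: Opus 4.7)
The plan is to combine results already assembled in the paper to derive the asymptotic \eqref{BxqaAsymp}, and then to handle the ``moreover'' clause by a separate structural argument.

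First, Corollary \ref{cor1} writes $B(x,q,a)$ as $\frac{1}{\varphi(q/d)}\sum_{m\mid q/d}\mu(m)B_{dm}(x)+O_A(x/(\log x)^A)$. Applying Lemma \ref{prop1} to each $B_{dm}(x)=B_{\theta,dm}(x)$ reduces it to $B_{\theta_{dm}}(x/(dm))$ up to an error $R(x,dm)\ll_A x/(\log x)^A$ supplied by \eqref{Req}. A short check confirms that $\theta_{dm}$, being $\theta$ evaluated at a divisor-shifted argument, inherits the hypotheses $n\le\theta_{dm}(n)$, $\theta_{dm}(n)\ll_q\sigma(n)$, and $\theta_{dm}(mn)\ll_q m^{O(1)}\theta_{dm}(n)$. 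Hence \eqref{Basymp} applies and gives $B_{dm}(x)=c_{dm}\,x/\log x+O_{\theta,q}(x/(\log x)^2)$. Substituting back produces \eqref{BxqaAsymp} with $c_{q,a}$ as in \eqref{cqa}.

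For the ``moreover'' clause, the direction $B(x,q,a)\le 1 \Rightarrow c_{q,a}=0$ is immediate from \eqref{BxqaAsymp} upon dividing by $x/\log x$ and letting $x\to \infty$. For the converse, I argue by contrapositive: suppose $B(x_0,q,a)\ge 2$ for some $x_0$. Then there exists $n_0\in\mathcal{B}(x_0)$ with $n_0>1$ and $n_0\equiv a\pmod q$, so $\gcd(n_0,q)=\gcd(a,q)=d$. The goal is to use $n_0$ to exhibit a subfamily of $\{n\in\mathcal{B}:\gcd(n,q)=d\}$ of counting function $\gg x/\log x$, which forces $\sum_{m\mid q/d}\mu(m)c_{dm}>0$ and hence $c_{q,a}>0$, contradicting the assumption.

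Define $\theta'(n):=\theta(n_0 n)$. Because $\theta(n)\ge n$, one has $\theta'(n)\ge n$, and the remaining hypotheses of \eqref{Basymp} transfer from $\theta$ to $\theta'$ (with implied constants depending on $n_0$, hence on $q$). Thus $|\mathcal{B}_{\theta'}(y)|\gg_{\theta,q}y/\log y$. For any $r\in \mathcal{B}_{\theta'}$ whose smallest prime factor exceeds $N:=\max(P^+(n_0),P^+(q))$, the integer $n_0 r$ lies in $\mathcal{B}$ (by the construction of $\theta'$) and satisfies $\gcd(n_0 r,q)=d$, since checking prime-by-prime shows $v_p(n_0 r)=v_p(n_0)$ for every $p\mid q$. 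A sieve excluding the finitely many primes $\le N$ from the factorization of $r$ should cost only a constant factor, leaving $\gg y/\log y$ such $r\le y$; taking $y=x/n_0$ yields the required $\gg x/\log x$ elements of $\{n\in\mathcal{B}(x):\gcd(n,q)=d\}$. The main obstacle is this last sieve step: justifying rigorously that removing multiples of each prime $p\le N$ from $\mathcal{B}_{\theta'}$ leaves a positive proportion. This will likely require either a finite-prime-exclusion version of \eqref{Basymp} (adapting the arguments of \cite{PDD,SPA}), or a short direct bound showing that for each fixed $p\le N$ the ratio $|\{r\in\mathcal{B}_{\theta'}(y):p\mid r\}|/|\mathcal{B}_{\theta'}(y)|$ is bounded away from $1$.
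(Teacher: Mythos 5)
Your treatment of the main asymptotic \eqref{BxqaAsymp} matches the paper: combine Corollary~\ref{cor1}, Lemma~\ref{prop1}, and \eqref{Basymp} applied to $\theta_{dm}$. For the ``moreover'' clause, the forward direction is also the same. The backward direction (showing $B(x,q,a)\ge 2$ for some $x$ forces $c_{q,a}>0$) is where you diverge from the paper and where there is a genuine gap.

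First, a small but necessary point: you take $n_0\in\mathcal{B}$ with $n_0>1$ and $n_0\equiv a\bmod q$, but what you actually need (and what $B(x,q,a)\ge 2$ provides) is an $m\in\mathcal{B}$ with $m>q$, since later you want $m$ to dominate the modulus when verifying the dense-chain condition. The paper explicitly uses $m>q$ for exactly this reason.

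The real gap is the sieve step you flag yourself. You want $\gg y/\log y$ integers $r\in\mathcal{B}_{\theta'}(y)$ with $P^-(r)>N$. Your proposed remedy --- showing that for each fixed $p\le N$ the proportion of $r\in\mathcal{B}_{\theta'}(y)$ divisible by $p$ is bounded away from $1$ --- does not suffice: even if each individual prime's multiples are a bounded-away-from-$1$ fraction, their union over all $p\le N$ could still exhaust the set, so no inclusion--exclusion follows without further input. You would need a genuine lower-bound sieve adapted to sets $\mathcal{B}_{\theta}$, or a version of \eqref{Basymp} with finitely many small primes excluded, neither of which is available in the paper.

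The paper sidesteps all of this by choosing a concrete, already-understood family. It sets $\mathcal{N}(x)=\{n\le x: n\equiv 1\bmod q,\ n\in\mathcal{D}_{q+1}\}$, where $\mathcal{D}_{q+1}$ is the set of $(q+1)$-dense integers, i.e. $\mathcal{B}_{\theta}$ with $\theta(n)=(q+1)n$. Claim (i), $|\mathcal{N}(x)|\gg_q x/\log x$, follows from Theorem~\ref{thm1} applied to $\mathcal{D}_{q+1}$ (reducing the congruence condition to the coprimality condition) together with the known lower bound for dense integers coprime to $q$ cited from \cite[Lemma~5]{PW}. Claim (ii), that $mn\in\mathcal{B}$ for $n\in\mathcal{N}(x)$, is a direct combinatorial check on the increasing-prime chain, using $m>q$ so that $q+1\le m$ covers the tail primes of $n$. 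This also produces $mn\equiv a\bmod q$ directly, rather than going through $\gcd(mn,q)=d$ and the sign-indefinite sum $\sum_{m\mid q/d}\mu(m)c_{dm}$ as you do. In short: the idea of multiplying a fixed $m\equiv a$ by a dense subfamily is shared, but the paper chooses the $(q+1)$-dense integers so the required lower bound is already a theorem, whereas your shifted $\theta'$ route opens an auxiliary sieve problem that is not resolved.
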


Margenstern \cite[Prop. 6 and p. 17]{Mar} showed that the number of practical $n$ with $n \equiv a \bmod q$ is either
$0$ (e.g. $10\bmod 12$) or $1$ (e.g. $2 \bmod 12$) or $\infty$ (e.g. $4\bmod 12$). 
Moreover, if there is only one solution, it must be less than $q$. 
The last statement of Corollary \ref{cor3}, which we prove in Section \ref{seccqa},
 says that if an arithmetic progression contains more than one practical number
(i.e. at least one that is $\ge q$), 
then it contains a positive proportion of all practical numbers, because $c_{q,a}>0$ in that case.  

Let $r_{q,a} := \lim_{x\to \infty} B(x,q,a)/B(x) = c_{q,a}/c_\theta$, the asymptotic ratio of members of $\mathcal{B}$
that are congruent to $a$ modulo $q$, and write $r_q=r_{q,0}$. Dividing \eqref{cqa} by $c_\theta$, we have
\begin{equation}\label{rqa}
r_{q,a}=\frac{1}{\varphi(q/d)} \sum_{m|q/d} \mu(m) r_{dm},\quad  d=\gcd(a,q).
\end{equation}
With the assumptions of Corollary \ref{cor3}, the constant factor $c_\theta$ in \eqref{Basymp} is given in
\cite[Thm. 1]{CFAE} as the sum of an infinite series,
$$
c_\theta = \frac{1}{1-e^{-\gamma}} \sum_{n\in \mathcal{B}} \frac{1}{n}  \Biggl( \sum_{p\le \theta(n)}\frac{\log p}{p-1} - \log n\Biggr) \prod_{p\le \theta(n)} \left(1-\frac{1}{p}\right),
$$
where $\gamma$ is Euler's constant and $p$ runs over primes.
In the case of $t$-dense integers \cite{CFAE} and practical numbers \cite{CFP}, we have methods for estimating 
this series for $c_\theta$ with good precision. The method in  \cite{CFP} works more generally if
$\theta(n)$ is close to a multiplicative function when $n$ is large. 
By making small adjustments to these algorithms, we can estimate the 
corresponding factors $c_{\theta_{q}}$, and therefore $c_q = c_{\theta_{q}}/q$ and $r_q = c_q/c_\theta$. 
The results of our calculations are shown in Table \ref{table1}.

\renewcommand{\arraystretch}{1.1}
\begin{table}[h]
     \begin{tabular}{ | r |  l |  l |  l | l | }
    \hline
    $q $ &\  \small{$\theta(n)=2n$} \  & \ \small{$\theta(n)=3n$}  \  & \ \small{$\theta(n)=5n$}\  & \small{$\theta(n)=\sigma(n)+1$} \\ \hline   
    $2$ & \ $1$ & \ $0.79003...$  & \ $0.71557...$  & \ \  $1$ \\ \hline
    $3$ & \ $0.63176...$  & \ $0.65544...$ &\  $0.57660...$ & \ \ $0.64880$ \\ \hline 
     $4$ & \ $0.78597... $  & \ $0.56470...$ & \ $0.48593...$ & \  \ $0.77728...$\\ \hline 
    $5$ & \ $0.38362$  & \ $0.41710...$  & \ $0.42042...$ & \  \ $0.38261...$ \\ \hline
    $6$ & \ $0.63176...$  &  \ $0.44548...$ &  \ $0.37177...$  & \ \ $0.64880$ \\ \hline 
    $7$ & \ $0.30335...$ & \ $0.29778...$   & \ $0.29217...$ & \ \ $0.29590...$   \\ \hline
    $8$ & \ $0.53410...$ &\ $0.37339...$  & \ $0.30509...$ & \ \ $0.52377...$ \\ \hline
    $9$ & \ $0.31635...$ &  \ $0.34353...$ & \ $0.28158...$  & \ \ $0.31603...$ \\ \hline
     $10$ &\  $0.38362$  & \  $0.32059...$ & \ $0.26100...$ & \  \ $0.38261...$ \\ \hline
    $11$ & \ $0.19841...$ & \  $0.19697...$  & \ $0.19088...$ & \ \ $0.19182...$  \\ \hline 
    $12$ &  \ $0.41774...$  &  \  $0.28277...$  & \ $0.22887...$  & \ \ $0.42608...$ \\ \hline 
    $13$ & \ $0.16292...$  &   \  $0.16279...$  & \ $0.16763...$  & \ \ $0.16786...$ \\ \hline 
    $14$ & \ $0.30335...$   &    \  $0.22041...$  & \ $0.20435...$  & \ \ $0.29590...$   \\ \hline
    $15$ & \ $0.22080...$ &     \  $0.24281...$  & \ $0.19443...$  & \ \ $0.22354...$ \\ \hline
    $16$ &\  $0.34407...$  &    \  $0.23200...$  & \ $0.18535...$  & \ \ $0.33425... $ \\ \hline
    $17$ &\  $0.12463...$  &   \  $0.13147...$ & \ $0.12813...$    & \ \ $0.12110...$ \\ \hline
    $18$ &\  $0.31635...$ &    \  $0.21317...$ & \ $0.16967...$   & \ \ $0.31603...$ \\ \hline
    $19$ &\  $0.11389...$ &      \  $0.11553...$  & \ $0.11713...$   & \ \ $0.11042...$ \\ \hline
    $20$ &\  $0.29434...$ &     \  $0.19736...$ & \ $0.15653$  & \ \ $0.29275...$ \\ \hline
     \end{tabular}        
   \medskip
     \caption{Truncated values of $r_q$, the asymptotic proportion of integers in $\mathcal{B}_\theta$ that are multiples of $q$. Values without ``...'' are rounded.}\label{table1}
\end{table}
\renewcommand{\arraystretch}{1}

Since all practical numbers greater than $1$ are even, their last decimal digit is determined by the remainder after dividing by $5$.
Table \ref{table1} shows that $r_5\approx 38.26\%$ of practical numbers have a last decimal digit of $0$, while the last digits of $2, 4, 6, 8$ each occur with a relative frequency of $(1-r_5)/4 \approx 15.43\%$, by \eqref{rqa}.

The distribution of $2$-dense integers modulo $12$ is given by Table \ref{table1} and \eqref{rqa} as 
$$
r_{12,0}=r_{12}\approx 0.4177, \quad
r_{12,6}=r_{6}-r_{12}\approx 0.2140, 
$$
$$
 r_{12,4}=r_{12,8}=\frac{r_4-r_{12}}{2}\approx 0.1841, \quad  r_{12,2}=r_{12,10}=0,
$$
and $r_{12,a}=0$ for odd $a$, where the vanishing ratios are a simple consequence of the definition of $2$-dense integers.

\section{Proof of Theorem \ref{thm1}}

Throughout this section, $\chi$ stands for a Dirichlet character  modulo $q$ and $\chi_0$ denotes the principal character. 
We write $s=\sigma + i \tau$, where $\sigma=\re(s)$ and $\tau = \im(s)$. 

\begin{lemma}\label{lemPrime}
We have
$$
\sum_{n\ge 2} \frac{1}{n (\log P^+(n))^{2}} \ll 1.
$$
\end{lemma}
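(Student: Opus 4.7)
The plan is to split the outer sum according to the value of $P^+(n)$, evaluate the inner sum via an Euler product, and then apply Mertens' theorem together with the convergence of $\sum_p 1/(p\log p)$.

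First I would write, by grouping $n\ge 2$ according to $P^+(n)=p$,
$$
\sum_{n\ge 2}\frac{1}{n(\log P^+(n))^2}
= \sum_p \frac{1}{(\log p)^2}\sum_{\substack{n\ge 2\\ P^+(n)=p}}\frac{1}{n}.
$$
Any $n$ with $P^+(n)=p$ factors uniquely as $n=p^k m$ with $k\ge 1$ and $P^+(m)<p$, so
$$
\sum_{\substack{n\ge 2\\ P^+(n)=p}}\frac{1}{n}
= \Biggl(\sum_{k\ge 1}\frac{1}{p^k}\Biggr)\prod_{q<p}\Biggl(1-\frac{1}{q}\Biggr)^{-1}
= \frac{1}{p-1}\prod_{q<p}\Biggl(1-\frac{1}{q}\Biggr)^{-1}.
$$

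Next I would invoke Mertens' third theorem, which gives $\prod_{q\le p}(1-1/q)^{-1}\ll \log p$, so the displayed inner quantity is $\ll (\log p)/p$. Substituting back,
$$
\sum_{n\ge 2}\frac{1}{n(\log P^+(n))^2}\ll \sum_p \frac{1}{p\log p}.
$$
Finally, $\sum_p 1/(p\log p)$ converges: by partial summation using $\pi(t)\ll t/\log t$ it is bounded by a constant multiple of $\int_2^\infty dt/(t(\log t)^2)<\infty$.

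There is no real obstacle here; the only thing to check carefully is that one separates the $k=1$ term correctly and that the Mertens bound is applied to primes strictly less than $p$ (differing from $\prod_{q\le p}$ only by the harmless factor $(1-1/p)^{-1}\le 2$). The whole argument is a one-line bookkeeping reduction to the classical fact that $\sum_p 1/(p\log p)$ converges.
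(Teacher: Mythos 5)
Your argument is correct and is essentially the same as the paper's: both group $n$ by $P^+(n)=p$, recognize the inner sum as an Euler product, apply Mertens to get $\ll (\log p)/p$, and conclude via the convergence of $\sum_p 1/(p\log p)$. The only cosmetic difference is that the paper extracts a single factor of $p$ (writing $n=pm$ with $P^+(m)\le p$) rather than summing the geometric series in $k$ separately.
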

\begin{proof}
The sum equals
$$
\sum_{p\ge 2} \frac{1}{p(\log p)^2} \sum_{P^+(m)\le p}\frac{1}{m}
= \sum_{p\ge 2} \frac{1}{p(\log p)^2} \prod_{q\le p} \left(1-\frac{1}{q}\right)^{-1} 
\ll \sum_{p\ge 2} \frac{1}{p\log p} \ll 1.
$$
\end{proof}

\begin{lemma}\label{lem1}
For $\gcd(a,q)=1$ we have
$$
\sum_{n \in \mathcal{B}(x) \atop n\equiv a \bmod q} 1
=\frac{1}{\varphi(q)} \sum_{n \in \mathcal{B}(x) \atop \gcd(n,q)=1} 1
+ \frac{1}{\varphi(q)}  \sum_{\chi \neq \chi_0} \overline{\chi}(a) \sum_{n\in \mathcal{B}(x)} \chi(n). 
$$
\end{lemma}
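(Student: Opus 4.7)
\medskip

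\noindent\textbf{Proof proposal.} This is a direct application of orthogonality of Dirichlet characters modulo $q$. The plan is to write the indicator of the arithmetic progression $a\bmod q$ as a sum over characters, swap the order of summation, and then isolate the principal character contribution.

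First, I would record the orthogonality identity in the form suited to our hypothesis $\gcd(a,q)=1$: for every $n\in\mathbb{N}$,
$$
\mathbf{1}_{n\equiv a \bmod q} = \frac{1}{\varphi(q)}\sum_{\chi \bmod q} \overline{\chi}(a)\, \chi(n).
$$
The standard proof of this identity assumes $\gcd(n,q)=1$. The small point to verify is that the identity remains valid when $\gcd(n,q)>1$: in that case every $\chi(n)$ vanishes, so the right-hand side is zero, and since $\gcd(a,q)=1$ the congruence $n\equiv a\bmod q$ forces $\gcd(n,q)\mid \gcd(a,q)=1$, contradicting $\gcd(n,q)>1$, so the left-hand side is zero as well.

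Next, summing this identity over $n\in\mathcal{B}(x)$ and interchanging the two finite sums yields
$$
\sum_{\substack{n\in\mathcal{B}(x)\\ n\equiv a \bmod q}} 1 = \frac{1}{\varphi(q)}\sum_{\chi \bmod q} \overline{\chi}(a) \sum_{n\in\mathcal{B}(x)} \chi(n).
$$
I would then separate the principal character $\chi_0$ from the rest. Since $\overline{\chi_0}(a)=1$ and $\chi_0(n)$ is the indicator of $\gcd(n,q)=1$, the $\chi_0$-term equals $\frac{1}{\varphi(q)}\sum_{n\in\mathcal{B}(x),\,\gcd(n,q)=1}1$, which is exactly the claimed main term. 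The remaining characters contribute the asserted error sum.

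There is no real obstacle here; the only subtle point worth a line of verification is the case $\gcd(n,q)>1$ in the orthogonality identity, which is harmless precisely because $\gcd(a,q)=1$. No property of the set $\mathcal{B}$ is used beyond its being a subset of $\mathbb{N}\cap[1,x]$, so the same decomposition works for any such set.
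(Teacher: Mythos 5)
Your proof is correct and is essentially the same as the paper's: the paper likewise derives the identity by summing the orthogonality relation for Dirichlet characters over $n\in\mathcal{B}(x)$ and then splitting off the principal character term. Your extra remark that the orthogonality identity holds even when $\gcd(n,q)>1$ (because both sides vanish under the hypothesis $\gcd(a,q)=1$) is a correct and useful clarification of what the paper leaves implicit.
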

\begin{proof}
This follows from summing over $n\in \mathcal{B}(x)$ the orthogonality relation for Dirichlet characters \cite[Eq. 4.27]{MV}. 
\end{proof}

\begin{lemma}\cite[Sec. 11.3.1, Ex. 5]{MV}\label{SW}
Let $A>0$ be fixed  and $\chi \neq \chi_0$. For $q\le (\log x)^A$ we have
$$
\sum_{p\le x} \chi(p)  \ll_A \frac{x}{(\log x)^A}.
$$
\end{lemma}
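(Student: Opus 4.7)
The plan is to reduce the claim, via Abel summation, to the corresponding Chebyshev-weighted bound $\sum_{p\le x}\chi(p)\log p \ll_A x/(\log x)^{A+1}$, and then to the standard Siegel--Walfisz estimate on $\psi(x,\chi)=\sum_{n\le x}\chi(n)\Lambda(n)$ by absorbing the contribution of prime powers $p^k$ with $k\ge 2$, which totals $O(\sqrt{x}\log x)$ and is negligible for the target error.

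The substance of the argument is the bound $\psi(x,\chi)\ll_A x/(\log x)^{A+1}$ for every non-principal character $\chi$ modulo $q\le(\log x)^A$. For this I would invoke the truncated explicit formula
$$\psi(x,\chi) = -\sum_{|\gamma|\le T}\frac{x^\rho}{\rho} + O\!\left(\frac{x(\log qx)^2}{T}\right),$$
with $\rho=\beta+i\gamma$ running over the non-trivial zeros of $L(s,\chi)$ and $T=\exp(\sqrt{\log x})$. For all characters modulo $q$ other than at most one exceptional real character, the classical de la Vall\'ee Poussin zero-free region $\beta\le 1-c/\log(q(|\gamma|+2))$, combined with standard density estimates for zeros in rectangles, pushes the zero contribution down to $\psi(x,\chi)\ll x\exp(-c'\sqrt{\log x})$, which beats any fixed negative power of $\log x$.

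For the potentially exceptional real character $\chi_1$ modulo $q$, there may be a Siegel zero $\beta_1$ near $1$ contributing a term $x^{\beta_1}/\beta_1$. Here I would invoke Siegel's theorem: for every $\varepsilon>0$ there exists $C(\varepsilon)>0$ such that $\beta_1\le 1-C(\varepsilon)q^{-\varepsilon}$. Choosing $\varepsilon=1/(2A)$ and using the hypothesis $q\le(\log x)^A$ gives $x^{\beta_1}\ll_A x\exp(-C'\sqrt{\log x})$, again much better than $x/(\log x)^{A+1}$. The remaining zeros of $L(s,\chi_1)$ are handled by the standard zero-free region as above.

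The main obstacle---and the reason the bound is uniform in $q\le(\log x)^A$ yet with an ineffective implied constant---is Siegel's theorem, whose proof by contradiction offers no explicit value for $C(\varepsilon)$. Everything else in the argument (partial summation, the truncated explicit formula, the classical zero-free region, and the final conversion from $\psi(x,\chi)$ to $\sum_{p\le x}\chi(p)$) is entirely routine once Siegel's theorem is in hand.
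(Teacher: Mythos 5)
The paper does not prove this lemma; it is quoted as a known result from Montgomery--Vaughan (Sec.\ 11.3.1, Ex.\ 5), where the reader is expected to supply exactly the argument you sketch. Your outline --- partial summation to pass to $\psi(x,\chi)$, the truncated explicit formula, the de la Vall\'ee Poussin zero-free region for non-exceptional zeros, and Siegel's theorem to handle a possible exceptional zero (source of the ineffectivity) --- is the standard proof of the Siegel--Walfisz theorem and is correct, so the proposal matches the intended approach of the cited reference.
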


\begin{lemma}\label{lempsum}
Let $A>0$ be fixed  and $\chi \neq \chi_0$. For $q\le (\log x)^A$  we have
$$
\sum_{p>x} \frac{\chi(p)}{p^s} \ll_{A} \frac{|\tau|+1 }{(\log x)^A} \qquad (\sigma \ge 1).
$$
\end{lemma}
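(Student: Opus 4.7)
The plan is to derive the estimate from Siegel--Walfisz (Lemma \ref{SW}) by Abel summation. I would first dispose of the range $\sigma\ge 2$, where the trivial bound
$$\left|\sum_{p>x}\frac{\chi(p)}{p^s}\right| \le \sum_{n>x}\frac{1}{n^2} \ll \frac{1}{x}$$
already dominates $(|\tau|+1)/(\log x)^A$ once $x$ is large in terms of $A$. The substance therefore lies in the strip $1\le \sigma \le 2$.

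Set $S(t)=\sum_{p\le t}\chi(p)$. Abel summation, combined with the fact that $S(T)/T^s\to 0$ when $\sigma\ge 1$, yields
$$\sum_{p>x}\frac{\chi(p)}{p^s} = -\frac{S(x)}{x^s} + s\int_x^\infty \frac{S(t)}{t^{s+1}}\,dt.$$

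The only (minor) subtlety is absorbing the factor $|s|$ produced by differentiating $t^{-s}$. To that end I would invoke Lemma \ref{SW} with exponent $A+1$ in place of $A$, obtaining $|S(t)|\ll_A t/(\log t)^{A+1}$ for every $t\ge x$; this is legitimate because the hypothesis $q\le (\log x)^A$ implies $q\le (\log t)^{A+1}$ throughout the range of integration. Using $|s|\le \sigma+|\tau|\le 2+|\tau|\ll |\tau|+1$ on $1\le \sigma\le 2$ and $\sigma\ge 1$, the boundary term contributes $\ll_A (\log x)^{-A-1}$, while the integral is bounded by
$$(|\tau|+1)\int_x^\infty \frac{dt}{t^\sigma(\log t)^{A+1}} \le (|\tau|+1)\int_x^\infty \frac{dt}{t(\log t)^{A+1}} \ll_A \frac{|\tau|+1}{(\log x)^A}.$$
Adding the two contributions yields the claim. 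I do not foresee any deep obstacle: the argument is a routine partial summation, the sole twist being the use of Lemma \ref{SW} with exponent $A+1$ to compensate for the extra factor of $|s|$.
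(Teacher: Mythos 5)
Your argument is correct and is precisely the partial-summation-plus-Siegel–Walfisz route the paper alludes to (the paper's proof is just the one-line remark ``this follows from Lemma \ref{SW} and partial summation; if $\sigma>2$, it is obvious''). You have rightly identified the one detail the paper leaves implicit, namely that one must invoke Lemma \ref{SW} with exponent $A+1$ (valid since $q\le(\log x)^A\le(\log t)^{A+1}$ for $t\ge x$) so that the extra $|s|$ and the logarithmic loss from $\int_x^\infty dt/(t(\log t)^{A+1})$ are absorbed, leaving exactly $(|\tau|+1)/(\log x)^A$.
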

\begin{proof}
If $1\le \sigma \le 2$, this follows from Lemma \ref{SW} and partial summation. If $\sigma >2$, it is obvious.
\end{proof}
\begin{lemma}\label{lemfunceq}
Assume $\theta$ satisfies \eqref{thetadef}.
Let $\chi$ be any Dirichlet character. For $\sigma>1$ we have
$$
L_\chi(s) :=\sum_{m\ge 1} \frac{\chi(m)}{m^s}= \sum_{n \in \mathcal{B}} \frac{\chi(n)}{n^s}\prod_{p>\theta(n)} \left(1-\frac{\chi(p)}{p^s}\right)^{-1}.
$$
\end{lemma}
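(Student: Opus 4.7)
The plan is to establish a bijective decomposition of positive integers $m$ as $m = nr$, where $n\in\mathcal{B}$ is a canonical ``$\mathcal{B}$-part'' of $m$ and $r$ has all prime factors strictly greater than $\theta(n)$. Once this decomposition is in place, the identity becomes a rearrangement of the Dirichlet series $L_\chi(s)$, justified by absolute convergence for $\sigma>1$.

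First I would describe the decomposition. Given $m\ge 1$ with prime factorization $m=p_1^{\alpha_1}\cdots p_k^{\alpha_k}$, $p_1<\ldots<p_k$, let $j=j(m)$ be the largest index in $\{0,1,\ldots,k\}$ such that $n:=p_1^{\alpha_1}\cdots p_j^{\alpha_j}$ lies in $\mathcal{B}$ (with $n=1$ when $j=0$). Set $r:=p_{j+1}^{\alpha_{j+1}}\cdots p_k^{\alpha_k}$. By the defining condition of $\mathcal{B}$ and the maximality of $j$, if $j<k$ then $p_{j+1}>\theta(n)$, hence every prime factor of $r$ exceeds $\theta(n)$. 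Conversely, given any $n\in\mathcal{B}$ and any $r\ge 1$ all of whose prime factors exceed $\theta(n)$, the product $nr$ is an integer whose associated index from the previous construction is exactly $\omega(n)$, so the recovered $\mathcal{B}$-part is $n$. The key uniqueness point is that if $n_1,n_2\in\mathcal{B}$ both arise from the same $m$ with $\omega(n_1)<\omega(n_2)$, then the smallest prime factor of $r_1=m/n_1$ would simultaneously have to exceed $\theta(n_1)$ (by construction of $r_1$) and be at most $\theta(n_1)$ (because $n_2\in\mathcal{B}$ extends $n_1$ by that very prime), a contradiction.

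Next I would expand the Euler product. For $\sigma>1$ and each fixed $n\in\mathcal{B}$,
$$
\prod_{p>\theta(n)}\left(1-\frac{\chi(p)}{p^s}\right)^{-1}=\sum_{r\ge 1,\ P^-(r)>\theta(n)}\frac{\chi(r)}{r^s},
$$
where $P^-(r)$ denotes the smallest prime factor of $r$ and the term $r=1$ is included. Substituting into the right-hand side of the claimed identity and writing $m=nr$, the bijective decomposition above gives
$$
\sum_{n\in\mathcal{B}}\frac{\chi(n)}{n^s}\sum_{P^-(r)>\theta(n)}\frac{\chi(r)}{r^s}=\sum_{m\ge 1}\frac{\chi(nr)}{(nr)^s}=\sum_{m\ge 1}\frac{\chi(m)}{m^s}=L_\chi(s),
$$
using that $\chi$ is completely multiplicative.

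The only technical point is justifying the rearrangement of the double sum. This is routine: for $\sigma>1$, replacing $\chi$ by $|\chi|\le 1$ and running the same decomposition shows that the repeated sum is dominated by $\sum_{m\ge 1} m^{-\sigma}=\zeta(\sigma)<\infty$, so Fubini/Tonelli applies and the interchange of summations is legitimate. I do not anticipate any real obstacle here; the conceptual content is entirely in the unique factorization $m=nr$, which is a direct consequence of the definition of $\mathcal{B}$ and of $\theta$ satisfying \eqref{thetadef}.
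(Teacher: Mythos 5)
Your proof is correct and takes essentially the same approach as the paper: the paper's one-line proof asserts exactly the unique factorization $m=nr$ with $n\in\mathcal{B}$ and $P^-(r)>\theta(n)$, and then relies on the Euler-product rearrangement. You have simply spelled out the existence and uniqueness of this decomposition and the absolute-convergence justification, which the paper leaves implicit.
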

\begin{proof}
Each $m \ge 1$ factors uniquely as $m=n r$, with $n\in \mathcal{B}$ and 
$P^-(r)>\theta(n)$ if $r>1$, where $P^-(r)$ denotes the smallest prime factor of $r$. 
\end{proof}

\begin{lemma}\label{lemL}
Let $\chi\neq \chi_0$ and $T\ge 2$. For $|\tau | \le T$ and $\sigma \ge 0$, 
$$ L_\chi(s) \ll (T q)^{\max(0,(1-\sigma)/2)} \log (T q).
$$
\end{lemma}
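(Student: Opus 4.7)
\emph{Plan.} The target bound is the standard convexity estimate for Dirichlet $L$-functions, which I would derive from the Phragm\'en--Lindel\"of principle applied to the strip $0\le\sigma\le 1$, after first bounding $L_\chi$ on the two boundary lines $\sigma=1$ and $\sigma=0$.

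For the line $\sigma=1$, and more generally for $\sigma\ge 1$ with $\abs{\tau}\le T$, I would split the defining series at $N=qT$: $L_\chi(s)=\sum_{n\le N}\chi(n)n^{-s}+\sum_{n>N}\chi(n)n^{-s}$. The head sum contributes $O(\log(qT))$ term-by-term. For the tail, I apply partial summation using the trivial periodicity bound $\abs{\sum_{n\le M}\chi(n)}\le q$ (valid for any $\chi\ne\chi_0$), obtaining $O(\abs{s}\, q\,(qT)^{-\sigma})=O(1)$ when $\sigma\ge 1$. Thus $\abs{L_\chi(s)}\ll\log(qT)$ on $\sigma\ge 1$.

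For the line $\sigma=0$, I would invoke the functional equation. After reducing $\chi$, if needed, to the primitive character $\chi^\ast$ mod $q^\ast\mid q$ that induces it (the relation being $L_\chi(s)=L_{\chi^\ast}(s)\prod_{p\mid q,\,p\nmid q^\ast}(1-\chi^\ast(p)p^{-s})$, whose finite Euler product is bounded for $\sigma\ge 0$), the functional equation writes $L_{\chi^\ast}(s)$ as the product of $L_{\overline{\chi^\ast}}(1-s)$ with a gamma factor that Stirling's formula shows to be $\ll(q^\ast T)^{1/2-\sigma}$ for $\abs{\tau}\le T$. At $\sigma=0$ this gamma factor is $\ll(qT)^{1/2}$ and the companion $L$-value is bounded by the previous step, yielding $\abs{L_\chi(s)}\ll(qT)^{1/2}\log(qT)$.

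Since $L_\chi(s)$ is entire for $\chi\ne\chi_0$ and of finite order, Phragm\'en--Lindel\"of applied to $L_\chi(s)(qT)^{s/2}$ in the strip $0\le\sigma\le 1$ interpolates between the two boundary bounds (both of size $(qT)^{1/2}\log(qT)$ for that auxiliary function) and yields $\abs{L_\chi(s)}\ll(qT)^{(1-\sigma)/2}\log(qT)$ throughout the strip, which glues continuously to the $\log(qT)$ bound already established for $\sigma\ge 1$. The main work is the functional-equation bookkeeping --- Stirling's bound on the gamma factor and the imprimitive-to-primitive reduction --- both of which are routine, so I anticipate no real obstacle.
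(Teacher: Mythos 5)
Your argument is correct in substance, but it takes a genuinely different (much longer) route than the paper: the paper's entire proof is a two-line citation, quoting Kolesnik's \cite[Eq.~(3)]{Kol} for the strip $0\le\sigma\le 1$ and Tenenbaum's \cite[Thm.~II.8.6]{Ten} for the half-plane $\sigma\ge 1$, whereas you reprove the convexity bound from scratch. Your derivation --- split the series at $N=qT$ with partial summation for $\sigma\ge 1$, reduce to a primitive character and apply the functional equation plus Stirling for $\sigma=0$, then interpolate by Phragm\'en--Lindel\"of --- is exactly the standard textbook proof that underlies those citations, so the mathematics is the same; the trade-off is self-containment versus brevity. One small technical remark: in the Phragm\'en--Lindel\"of step you should take the auxiliary function to involve $q(|\tau|+2)$ (or $q(|s|+2)$) rather than the fixed quantity $qT$, since Phragm\'en--Lindel\"of requires control on the entire vertical boundary lines of the strip, not just the portion $|\tau|\le T$; after obtaining $L_\chi(s)\ll(q(|\tau|+2))^{(1-\sigma)/2}\log(q(|\tau|+2))$ uniformly, you then specialize to $|\tau|\le T$ to read off the stated bound. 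This is a routine adjustment and does not affect the soundness of your plan.
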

\begin{proof}
When $0\le \sigma \le 1$, this is \cite[Eq. (3)]{Kol}. If $\sigma \ge 1$, this follows from \cite[Thm. II.8.6]{Ten}
\end{proof}

\begin{proof}[Proof of Theorem \ref{thm1}]
Assume $\chi \neq \chi_0$. We have
$$
\sum_{n \in \mathcal{B}(x)} \chi(n) = \sum_{n\le x}\chi(n)- \sum_{n \le x \atop n \notin  \mathcal{B}(x)} \chi(n) 
=O(q) - S(x),
$$
say. Theorem \ref{thm1} follows from Lemma \ref{lem1} if we can show $S(x) \ll x/(\log x)^A$.
With $T=(\log x)^{A+1}$ and $\kappa = 1+1/\log x$, Perron's formula \cite[Cor. II.2.2.1]{Ten} yields
\begin{equation}\label{Seq}
S(x) = \frac{1}{2\pi i} \int_{\kappa - i T}^{\kappa + i T} F(s) x^s \frac{ds}{s} + O\left(\frac{x }{(\log x)^A}\right),
\end{equation}
where, for $\sigma >1$, 
$$
F(s) := \sum_{m\notin \mathcal{B}} \frac{\chi(m)}{m^s} 
= \sum_{n \in \mathcal{B}} \frac{\chi(n)}{n^s} 
\left(\prod_{p>\theta(n)} \left(1-\frac{\chi(p)}{p^s}\right)^{-1}   -1 \right),
$$
by Lemma \ref{lemfunceq}.
Let $F_1(s)$ (resp. $F_2(s)$) denote the contribution to the last sum from $n$ with $\theta(n)\le N$ (resp. $\theta(n)>N$),
where $N$ is given by $\log N = T^{1/\lambda}$ and $\lambda>0$ is constant. 

We first estimate the contribution of $F_2(s)$ to the integral in \eqref{Seq}. 
For any fixed $\mu\ge \lambda$ and all $n$ with $\theta(n)>N$, we have
$$
q \le T \le T^{\mu/\lambda} = (\log N)^\mu < (\log \theta(n))^\mu.
$$
If $|\tau|\le T$, Lemma \ref{lempsum} yields
$$
\prod_{p>\theta(n)} \! \left(1-\frac{\chi(p)}{p^s}\right)^{-1} \!\!\!
=\exp\left(\sum_{p>\theta(n)}\frac{\chi(p)}{p^s} + O\left(\frac{1}{\theta(n)}\right)\right)
\! =1+O\left(\frac{|\tau|+1}{(\log \theta(n))^\mu}\right).
$$
Thus, 
$$
F_2(s) \ll \sum_{\theta(n)>N} \frac{1}{n} \cdot \frac{|\tau|+1}{(\log \theta(n))^\mu}
\ll \frac{|\tau|+1}{(\log N)^{\mu-2}},
$$
by \eqref{thetadef} and Lemma \ref{lemPrime}. With $\mu=2\lambda+2$, the contribution of $F_2(s)$ to \eqref{Seq} is
$$
 \frac{1}{2\pi i} \int_{\kappa - i T}^{\kappa + i T} F_2(s) x^s \frac{ds}{s}
 \ll \frac{x T}{(\log N)^{\mu-2}}
 =\frac{x}{T^{(\mu-2)/\lambda -1}}
 =\frac{x}{T}
 =\frac{x}{(\log x)^{A+1}}.
$$

To estimate the contribution of $F_1(s)$ to \eqref{Seq}, we rewrite $F_1(s)$ as 
$$
F_1(s) = \sum_{n \in \mathcal{B} \atop \theta(n)\le N}
 \frac{\chi(n)}{n^s} 
\left(L_\chi (s) \prod_{p\le \theta(n)} \left(1-\frac{\chi(p)}{p^s}\right)   -1 \right).
$$
This formula, initially valid for $\sigma>1$, gives the analytic continuation of $F_1(s)$ to 
the half-plane $\sigma >0$, since 
$$
\prod_{p\le \theta(n)} \left|1-\frac{\chi(p)}{p^s}\right|  \le 2^{\theta(n)} \le 2^N
$$
and
$$
\sum_{n \in \mathcal{B} \atop \theta(n)\le N} \frac{|\chi(n)|}{n^\sigma} 
 \le \sum_{P^+(n)\le N} \frac{1}{n^\sigma} 
 =\prod_{p\le N}\left(1-\frac{1}{p^\sigma}\right)^{-1}.
$$
We replace the vertical line segment, with endpoints
$\kappa - iT$ and $\kappa +iT$, by the three line segments with endpoints $\kappa - iT$, $\alpha -iT$, $\alpha+iT$,
and $\kappa +iT$, where $\alpha = 1-1/(\log x)^\delta$ and $\delta = (A+1)/\lambda$. 
On each of  these three line segments, we have $\alpha \le \sigma \le \kappa$ and $|\tau|\le T$, so that 
$$
\prod_{p\le \theta(n)} \left(1-\frac{\chi(p)}{p^s}\right) 
\ll \exp\left( \sum_{p\le \theta(n)} \frac{1}{p^\alpha} \right)
\le \exp\left( \sum_{p\le N} \frac{N^{1-\alpha}}{p} \right),
$$
$$
 \sum_{n \in \mathcal{B} \atop \theta(n)\le N} \frac{|\chi(n)|}{n^\sigma} 
 \le \sum_{P^+(n)\le N} \frac{1}{n^\alpha} 
 =\prod_{p\le N}\left(1-\frac{1}{p^\alpha}\right)^{-1}
 \ll \exp\left( \sum_{p\le N} \frac{N^{1-\alpha}}{p} \right),
$$
$$
\exp\left( \sum_{p\le N} \frac{N^{1-\alpha}}{p} \right)
=\exp\bigl(N^{1-\alpha} (\log\log N +O(1))\bigr)
\ll (\log x)^{e(A+1)/\lambda}
$$
and
$$
L_\chi(s) \ll (Tq)^{1/(\log x)^{\delta}}\log (Tq) \ll \log\log x,
$$
by Lemma \ref{lemL}.
With $\lambda = 6(A+1)$, we get 
$$
F_1(s) \ll (\log x)^{2e(A+1)/\lambda} \log\log x \ll \log x.
$$
Since $\delta = (A+1)/\lambda = 1/6$, the contribution from the vertical segment is
$$
 \frac{1}{2\pi i} \int_{\alpha - i T}^{\alpha + i T} F_1(s) x^s \frac{ds}{s} 
 \ll x^\alpha (\log x) (\log T) \ll_A \frac{x}{(\log x)^A}. 
$$
The contribution from the two horizontal segments can be estimated as
$$
 \frac{1}{2\pi i} \int_{\alpha + i T}^{\kappa + i T} F_1(s) x^s \frac{ds}{s} \ll \frac{x \log x}{T}=\frac{x}{(\log x)^A}. 
$$
This completes the proof of Theorem \ref{thm1}.
\end{proof}

\section{Proof of Lemma \ref{prop1}}

\begin{lemma}\label{lemprop1}
Assume \eqref{thetadef} and $\theta(n)\le \theta(mn)$ for all $n,m\in \mathbb{N}$. 
We have  
\begin{equation}\label{sets}
\{m: mq \in \mathcal{B}_\theta \} \subset \mathcal{B}_{\theta_q} 
\subset \{m: mq \in \mathcal{B}_\theta \}  \cup \{m: \theta(m)<P^+(q)\}.
\end{equation}
\end{lemma}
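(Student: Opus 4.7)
The plan is to reformulate membership in $\mathcal{B}_\theta$ and $\mathcal{B}_{\theta_q}$ via a per-prime condition and then track a single combinatorial index throughout both inclusions. For any integer $n$ and any prime $P$, let $n_{<P}$ denote the largest divisor of $n$ supported on primes strictly less than $P$; then $n \in \mathcal{B}_\theta$ if and only if $P \le \theta(n_{<P})$ for every prime $P | n$, and analogously $m \in \mathcal{B}_{\theta_q}$ amounts to $p \le \theta_q(m_{<p})$ for every prime $p | m$. The crucial observation is that $(mq)_{<P} = m_{<P}\, q_{<P}$ and $q_{<P} = q_1 \cdots q_t$, where $t := \max\{i : q_i < P\}$ (with $t=0$ if no such index exists); this identity lets us compare the greedy prefix $q_1 \cdots q_j$ from the definition of $\theta_q$ with the geometric prefix $q_1 \cdots q_t$ coming from the divisor structure of $q$.

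For the first inclusion, suppose $mq \in \mathcal{B}_\theta$ and fix a prime $p | m$. I split on the definition of $\theta_q(m_{<p})$. If $\theta(m_{<p}) < q_1$, I first observe that $q$ can have no prime factor below $p$: otherwise $q_1 < p$, and applying $mq \in \mathcal{B}_\theta$ to the prime $q_1$ would yield $q_1 \le \theta((mq)_{<q_1}) = \theta(m_{<q_1}) \le \theta(m_{<p}) < q_1$, a contradiction. Hence $q_{<p}=1$ and $p \le \theta((mq)_{<p}) = \theta(m_{<p}) = \theta_q(m_{<p})$. If instead $\theta(m_{<p}) \ge q_1$, then applying $mq \in \mathcal{B}_\theta$ to each distinct prime of $q$ below $p$ together with monotonicity gives $q_\ell \le \theta(m_{<p}\, q_1 \cdots q_{\ell-1})$ for every $\ell \le t$, which is exactly the defining condition for the greedy index $j$ to satisfy $j \ge t$. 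Thus $\theta_q(m_{<p}) = \theta(m_{<p}\, q_1 \cdots q_j) \ge \theta(m_{<p}\, q_{<p}) \ge p$.

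For the second inclusion, by contraposition it suffices to show that $m \in \mathcal{B}_{\theta_q}$ together with $\theta(m) \ge P^+(q)$ forces $mq \in \mathcal{B}_\theta$; i.e., $P \le \theta(m_{<P}\, q_{<P})$ for every prime $P | mq$. If $P = p$ is a prime of $m$, the hypothesis $p \le \theta_q(m_{<p})$ gives the bound by a case analysis parallel to the first inclusion: the subcase $\theta(m_{<p}) < q_1$ follows from monotonicity, and the subcase $\theta(m_{<p}) \ge q_1$ splits on $j \le t$ (monotonicity again) or $j \ge t+1$, in which case $p \le q_{t+1} \le \theta(m_{<p}\, q_1 \cdots q_t) = \theta(m_{<p}\, q_{<p})$ by the maximality of $j$ and $q_{t+1} \ge p$. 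If $P = Q$ is a prime of $q$ not dividing $m$, I split on whether $m$ has a prime factor exceeding $Q$. If not, then $m_{<Q} = m$ and $\theta(m_{<Q}\, q_{<Q}) \ge \theta(m) \ge P^+(q) \ge Q$, which is the one place the extra hypothesis is used. If yes, let $p_{i^*}$ be the smallest such prime; then $m_{<p_{i^*}} = m_{<Q}$, the subcase $\theta(m_{<Q}) < q_1$ is ruled out by the contradiction $p_{i^*} \le \theta(m_{<Q}) < q_1 \le Q < p_{i^*}$, and the remaining subcase closes by the same $j$-versus-$t$ dichotomy.

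The main difficulty is the careful bookkeeping between three related but distinct orderings: the multiset listing $q_1 \le \cdots \le q_k$ used to define the greedy prefix in $\theta_q$, the divisor $q_{<P}$ of $q$ aggregating prime powers below $P$, and the interleaving of the distinct primes of $m$ and $q$ inside $mq$. The identity $q_{<P} = q_1 \cdots q_t$ is the bridge that reconciles these three viewpoints, and once it is in hand both inclusions reduce to the same systematic comparison of the greedy index $j$ with the threshold $t$.
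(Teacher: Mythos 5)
Your proof is correct and follows essentially the same route as the paper's: in both arguments the heart of the matter is comparing the greedy index $j$ in the definition of $\theta_q$ with the position $t$ of a given prime among the sorted prime factors $q_1\le\cdots\le q_k$, using monotonicity of $\theta$ and the identity $(mq)_{<P}=m_{<P}\,q_{<P}$ to pass between the two. The only presentational difference is that you prove the second inclusion by contraposition, verifying $P\le\theta((mq)_{<P})$ for every prime $P\mid mq$ under the hypotheses $m\in\mathcal{B}_{\theta_q}$ and $\theta(m)\ge P^+(q)$, whereas the paper locates a single violated condition witnessing $mq\notin\mathcal{B}_\theta$ and shows it forces one of the two alternatives; this is a logically equivalent reorganization of the same case analysis.
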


Lemma \ref{prop1} follows from \eqref{sets} by counting the members of each set up to $x/q$. 

\begin{proof}[Proof of Lemma \ref{lemprop1}]

To show the first set inclusion in \eqref{sets}, let $mq\in \mathcal{B}$.
Let $m=p_1\cdots p_l$ and $q=q_1\cdots q_k$  be
the prime factorizations in increasing order. 
Let $i$ satisfy $1\le i \le l$ and define $j$ with $0\le j \le k$  by $q_j<p_i\le q_{j+1}$.
Since $mq\in \mathcal{B}$, we have
$$
p_i\le \theta(p_1 \cdots p_{i-1} q_1\cdots q_j).
$$
If $j=0$, then $p_i \le \theta(p_1 \cdots p_{i-1} ) \le \theta_q(p_1\cdots p_{i-1})$. 
If $j\ge 1$, then  for $1\le s\le j$, $mq\in \mathcal{B}$ implies
$$
q_s \le \theta\Bigl(q_1\cdots q_{s-1} \prod_{p_t < q_s}p_t \Bigr)
\le  \theta\Bigl(q_1\cdots q_{s-1} \prod_{p_t < q_j}p_t \Bigr)
\le  \theta(q_1\cdots q_{s-1} p_1\cdots p_{i-1}).
$$
This shows that $\theta_q(p_1\cdots p_{i-1})\ge \theta(p_1 \cdots p_{i-1} q_1\cdots q_j) \ge p_i$,
so $m \in \mathcal{B}_{\theta_q} $.

To show the second set inclusion in \eqref{sets}, assume that $mq \notin \mathcal{B}$. 
Then there exists and index $i$ such that either
\begin{equation}\label{vio1}
q_i>\theta\Bigl(q_1\cdots q_{i-1} \prod_{p_s<q_i}p_s  \Bigr)
\end{equation}
or 
\begin{equation}\label{vio2}
p_i>\theta\Bigl(p_1\cdots p_{i-1} \prod_{q_s<p_i}q_s\Bigr).
\end{equation}
First assume \eqref{vio1}. If $m=\prod_{p_s<q_i}p_s$, then $q_i>\theta(m)$, so $\theta(m)<P^+(q)$. 
If $\prod_{p_s<q_i}p_s < m$, then $\prod_{p_s<q_i}p_s=p_1\cdots p_{r-1}$, say. We have
$$
p_r \ge q_i > \theta(p_1\cdots p_{r-1} q_1\cdots q_{i-1})\ge \theta_q(p_1\cdots p_{r-1}),
$$
so $m \notin  \mathcal{B}_{\theta_q} $. 

If \eqref{vio2} holds, then 
$$
p_i>\theta\Bigl(p_1\cdots p_{i-1} \prod_{q_s<p_i}q_s\Bigr) \ge  \theta_q(p_1\cdots p_{i-1}).
$$
The last inequality clearly holds if $q=\prod_{q_s<p_i}q_s$. If $q>\prod_{q_s<p_i}q_s=q_1\cdots q_{j-1}$,
say, then the last inequality follows from $q_j\ge p_i$. 
Thus, we have $m \notin  \mathcal{B}_{\theta_q} $. 
\end{proof}

\section{Proof of Corollary \ref{cor0}}
Let $d=\gcd(q,a)$. If $n\equiv a \bmod q$, we write $q=d \tilde{q}$, $a=d\tilde{a}$ and 
$n=d\tilde{n}$. 
By Lemma \ref{lemprop1},
$$
\sum_{n \in \mathcal{B}(x) \atop n\equiv a \bmod q} 1
=\sum_{d\tilde{n} \in \mathcal{B}(x) \atop \tilde{n}\equiv \tilde{a} \bmod \tilde{q}} 1
=O(\Psi(x/d,d))
+\sum_{\tilde{n} \in \mathcal{B}_{\theta_d}(x/d) \atop \tilde{n}\equiv \tilde{a} \bmod \tilde{q}} 1.
$$
Since $\gcd(\tilde{q},\tilde{a})=1$, Theorem \ref{thm1} shows that the last sum is
$$
 O_A\left(\frac{x/d}{(\log(x/d))^A}\right)+ \frac{1}{\varphi(\tilde{q})} 
 \sum_{\tilde{n} \in \mathcal{B}_{\theta_d}(x/d) \atop \gcd(\tilde{n},\tilde{q})=1} 1 .
$$
Appealing again to Lemma \ref{lemprop1}, we have
$$
\sum_{\tilde{n} \in \mathcal{B}_{\theta_d}(x/d) \atop \gcd(\tilde{n},\tilde{q})=1} 1 
=O(\Psi(x/d,d))+ \sum_{\tilde{n}d \in \mathcal{B}_{\theta}(x) \atop \gcd(\tilde{n},\tilde{q})=1} 1 
=O(\Psi(x/d,d))+\sum_{n \in \mathcal{B}_{\theta}(x) \atop \gcd(n,q)=d} 1 .
$$
Since $d \le q \le (\log x)^A$, we have $\Psi(x/d,d)\le \Psi(x,q) \ll_A x/(\log x)^A$, by \eqref{Req}.

\section{Proof of the last statement of Corollary \ref{cor3}}\label{seccqa}

If $B(x,q,a)\le 1$ for all $x$, then $c_{q,a}=0$ by \eqref{BxqaAsymp}.

If $B(x,q,a) \ge 2$ for some $x$, then there exists an $m\in \mathcal{B}$, $m>q$, with $m\equiv a \bmod q$. 
Consider the set 
$$
\mathcal{N}(x)=\{ n\le x: n\equiv 1 \bmod q, n \in \mathcal{D}_{q+1} \},
$$
where $\mathcal{D}_t$ denotes the set of $t$-dense integers. We claim that (i)
$|\mathcal{N}(x)| \gg_q x/\log x$ and that (ii) $mn \in \mathcal{B}$ for all $n\in \mathcal{N}(x)$. 
Since $mn\equiv a \bmod q$, these two claims imply $c_{q,a}>0$. 
By Theorem \ref{thm1}, the first claim is equivalent to 
$$
|\{ n\le x: \gcd(n,q)=1,  n \in \mathcal{D}_{q+1} \}| \gg_q \frac{x}{\log x},
$$
which follows from \cite[Lemma 5]{PW}.

To show the second claim, assume $m\in \mathcal{B}$, $m>q$, $n\in \mathcal{N}(x)$, and write $m=r_1\cdots r_l$, $n=p_1\cdots p_k$, the prime factorizations in increasing order. 
Since $m\in \mathcal{B}$, we have
$$
r_{i+1}\le \theta(r_1 \cdots r_i) \le \theta\Bigl(r_1 \cdots r_i \prod_{p_j<r_{i+1}}p_j\Bigr) \quad (0\le i < l).
$$
If $r_j< p_{i+1}\le r_{j+1}$, for some $i<k$, $j<l$, then
$$
p_{i+1}\le r_{j+1}\le \theta(r_1\cdots r_j) \le \theta (r_1 \cdots r_j  \, p_1 \cdots p_i ) .
$$
If $p_{i+1} > r_l$, then $n\in \mathcal{D}_{q+1}$ implies
$$
p_{i+1} \le (q+1)p_1\cdots p_i \le m p_1\cdots p_i \le \theta(m p_1\cdots p_i).
$$
This shows that $mn \in \mathcal{B}$. 

\section*{Acknowledgments}
The author thanks John Friedlander for pointing out \cite{Fried}, Felix Weingartner for sharing his computer to complete the
calculations leading to Table \ref{table1}, and the anonymous referee for helpful suggestions.

\end{document}